\documentclass[A4paper,12pt]{article}
\usepackage{latexsym}
\usepackage{mathrsfs}
\usepackage{amssymb}
\usepackage{amscd}
\usepackage[dvips]{graphicx}                  

\newtheorem{theorem}{Theorem}

\newenvironment{proof}[1][Proof]{\textbf{#1.} }{\ \rule{0.5em}{0.5em}}
\frenchspacing
%\date{}
\long\def\symbolfootnote[#1]#2{\begingroup%
	\def\thefootnote{$\;$}\footnote[#1]{$^*$#2}\endgroup}
\begin{document}
	
	\title{Partitions of non-complete Baire metric spaces}
	\author{Ryszard Frankiewicz and Joanna Jureczko\footnote{The author is partially supported by Wroc\l{}aw Univercity of Science and Technology grant no. 049U/0032/19.}}
\maketitle

\symbolfootnote[2]{Mathematics Subject Classification: Primary 03C25, 03E35, 03E55, 54E52.

	\hspace{0.2cm}
	Keywords: \textsl{non-complete Baire metric space, Kuratowski partition, precipitous ideal, K-ideal,  game theory.}}

\begin{abstract}
We investigate the properties of ideals associated with Kuratowski partitions of non-complete Baire metric spaces. We show that such an ideal can be precipitous.
\end{abstract}

\maketitle

\section{Introduction}

In 1935 K. Kuratowski in \cite{KK} posed the problem  whether a function $f \colon X \to~Y$, (where $X$ is completely metrizable and $Y$ is metrizable), such that each preimage of an open set of $Y$ has the Baire property, is continuous apart from a meager set.

 R. H. Solovay and L. Bukovsk\'y independently proved the non-existence of Kuratowski partitions of the unit interval $[0,1]$ for measure and category by forcing methods (and the generic ultrapower), but Bukovsk\'y's proof, (see \cite{LB}), is shorter and less complicated than Solovay's (unpublished results).

In \cite{EFK} there is shown that this problem is equivalent to the problem of the existence of partitions of completely metrizable spaces into meager sets with the property that the union of each subfamily of this partition has the Baire property. Such a partition is called  \textit{Kuratowski partition}, (see the next section for a formal definition).

In paper \cite{JJ} there was introduced the notion of $K$-ideals associated with Kuratowski partitions and there were examined their properties.
It can be supposed that from a structure of such  $K$-ideal one can "decode" complete information about Kuratowski partition of a given space. Unfortunately, this is not the case because, as shown in \cite{JJ}, the structure of such an ideal can be almost arbitrary, i.e. it can be the  Fr\'echet ideal, so by
\cite[Lemma 22.20, p. 425]{TJ} it is not precipitous, whenever $\kappa$ is regular.
Moreover, as demonstrated in \cite{JJ},  for measurable cardinal $\kappa$, a $\kappa$-complete ideal can be represented by some $K$-ideal.
However if $\kappa =|\mathcal{F}|$ is not measurable cardinal, where $\mathcal{F}$ is Kuratowski partition of a given space, then one can obtain an $|\mathcal{F}|$-complete ideal which can be the Fr\'echet ideal or a $\kappa$-complete ideal representating some $K$-ideal or can  be a proper ideal of such $K$-ideal and contains the Fr\'echet ideal. 
Thus, for obtaining Kuratowski partition from  $K$-ideal we need to have complete information about the space in which the ideal is considered.

In the presence of above considerations and the statement that 
 ZFC + "there exists precipitous ideal" is equiconsistent with ZFC + "there exists measurable cardinal", (see \cite{FK}), the natural question is: under which assumptions $K$-ideals can be precipitous, see  \cite[Theorem 22.33, p.432]{TJ}. Such information can lead us to prove the required implication mentioned above. 
 Our work in this topic (divided into two papers) enlarges results of \cite{FK}, where the authors  proved among others   that ZFC + "there exists a  Kuratowski partition of a Baire metric space" is consistent, then ZFC + "there exists  measurable cardinal" is consistent as well, by using  forcing methods  (i. e. a model of the G-generic ultrapower in Keisler sense, see \cite[sec. 6.4]{CK} and \cite{TJ} for details). 
 In this paper we show that $K$-ideal of non-complete Baire metric space can be precipitous.  In \cite{FK} there is shown combinatorial proof of the following statement: if a cardinal $\kappa$ is measurable, then there exists  a complete metric Baire space with  Kuratowski partition of size $\kappa$. Thus, it remains to show the converse implication. (As it will be shown in the proof, the assumption of completeness of this space cannot be omitted). This result will be given in a separate paper.

This paper consists of three  sections. Section 2 contains definitions and previous results concerning among others a Kuratowski partition,  and a precipitous ideal. 
Section 3 there is presented the main result: if $X$ is a  Baire metric space with  Kuratowski partition  $\mathcal{F}$ of size $\kappa$, ($\kappa$ is regular and uncountable) and $I_{\mathcal{F}}$ is a $K$-ideal associated with $\mathcal{F}$, then there exists an open set $U \subseteq X$ such that the $K$-ideal $I_{\mathcal{F}\cap U}$ is a precipitous ideal on $\kappa$.  
Since the proof of this result seems to be a bit complicated, we decided to present the  second proof of this theorem in the game theoretic notion. Namely,  we will use the game theoretic characterisation of precipitous ideals, see \cite{JG}. In this place it is worth emphasizing that the notion "$\alpha$-favorable", used in Section 3, comes from Choquet, (see \cite{C}, where the reader can also find more information about equivalences of Baire spaces in terms of games). The paper is finished with Section 4 including open problem concerning possibilities of enlarging our results for weakly- and pseudo precipitous ideals.

For definitions and  facts not cited here we refer to e.g. \cite{RE, KK1} (topology) and \cite{TJ} (set theory).

\section{Definitions and previous results}

\textbf{2.1.} Let $X$ be a topological space.  A set $U \subseteq X$ has \textit{the Baire property} iff there exist an open set $V \subset X$ and a meager set $M \subset X$ such that $U = V \triangle M$, where $\triangle$ means the symmetric difference of sets.
\\
\\
\textbf{2.2.} A partition $\mathcal{F}$ of $X$ into meager subsets of $X$ is called \textit{Kuratowski partition} iff $\bigcup \mathcal{F}'$ has the Baire property for all $\mathcal{F}' \subseteq \mathcal{F}$.
If there exists a Kuratowski partition of $X$ we always denote by $\mathcal{F}$ with the smallest cardinality $\kappa$. Moreover, we enumerate $$\mathcal{F} = \{F_\alpha \colon \alpha < \kappa\}.$$
Obviously,  $\kappa$ is regular. If $\kappa$ was singular, then $cf(\kappa)$ would be the minimal one. By Baire Theorem $\kappa$ is uncountable.

For a given set $U \subseteq X$  the family 
$$\mathcal{F}\cap U = \{F \cap U \colon F \in \mathcal{F}\}$$
is Kuratowski partition of $U$ as a subspace of $X$.
\\
\\
\textbf{2.3.} With any Kuratowski partition 
$\mathcal{F} = \{F_\alpha \colon \alpha < \kappa\}$, indexed by a cardinal $\kappa$,  one may associate an ideal 
$$I_\mathcal{F} = \{A \subset \kappa \colon \bigcup_{\alpha \in A} F_\alpha \textrm{ is meager}\}$$
which is called \textit{$K$-ideal}, (see \cite{JJ}).
\\
Note, that $I_\mathcal{F}$ is a non-principal ideal. Moreover, $[\kappa]^{< \kappa} \subseteq I_{\mathcal{F}}$ because $\kappa = \min\{|\mathcal{F}| \colon \mathcal{F} \textrm{ is Kuratowski  partition of }X\}$.
\\
\\
\textbf{2.4.} Let $I$ be an ideal on $\kappa$ and let $S$ be a set with positive measure, i.e. $S \in P(\kappa) \setminus I$. (For our convenience we use $I^+$ instead of $P(\kappa) \setminus I$). 
\\An \textit{$I$-partition} of $S$ is a maximal family $W$ of subsets of $S$ of positive measure such that $A \cap B \in I$ for all distinct $A, B \in W$.

An $I$-partition $W_1$ of $S$ is a \textit{refinement} of an $I$-partition $W_2$ of $S$, ($W_1 \leq~W_2$), iff each $A \in W_1$ is a subset of some $B\in W_2$.

A \textit{functional} $\Phi$ on $S$ is a collection  of functions such that  $ \{dom(f) \colon f \in \Phi\}$ is an $I$-partition of $S$ and $dom(f) \not = dom(g)$, whenever $f \not=g \in \Phi$. This $I$-partition will be denoted by $W_\Phi$.
Elements of functionals will be called \textit{$I$-functions}.
\\
We define $\Phi \leqslant \Psi$ if
\\
(i) each $f \in \Phi \cup \Psi$ is a function into the ordinals;
\\
(ii) $W_\Phi \leq W_\Psi$;
\\
(iii) if $f \in \Phi$ and $g \in \Psi$ are such that $dom(f) \subseteq dom(g)$, then $f(x) < g(x)$ for all $x \in dom(f)$.
\\

If  $I$ is a $\kappa$-complete ideal on $\kappa$ containing singletons, then $I$ is \textit{precipitous} iff whenever $S \in I^+$ and $\{W_n \colon n < \omega\}$ is a sequence of  $I$-partitions of $S$ such that 
$W_0 \geq W_1\geq ... \geq W_n \geq ...$,
then there exists a sequence of sets
$X_0 \supseteq X_1\supseteq ... \supseteq X_n \supseteq ...$
such that $X_n \in W_n$ for each $n\in \omega$ and $\bigcap_{n=0}^{\infty} X_n \not = \emptyset$, (see also \cite[p. 424-425]{TJ}).
\\

The ideal $I_{\mathcal{F}}$ is \textit{an everywhere precipitous ideal} if $I_{\mathcal{F}\cap U}$ is precipitous for each non-empty open set $U \subseteq X$. 
\\

We will need the following characterization of precipitous ideals, (see \cite[Lemma 22.19, p. 424-25]{TJ}).
\\
\\
\textbf{Fact 1 (\cite{TJ})} The following are equivalent
	
	(i) $I$ is precipitous;
	
	(ii) For no $S$ of a positive measure is there a sequence of functionals on $S$ such that $\Phi_0>\Phi_1>...>\Phi_n>...\ .$
\\
\\
\textbf{2.5.} 
Let $FN(\kappa) = \{f \in {^{X}}\kappa \colon \exists_{\mathcal{U}_f} \textrm{ family of open disjoint sets, } \bigcup \mathcal{U}_f \textrm{ is dense in } \\X \textrm{ and } \forall_{F_\alpha \in \mathcal{F}} \forall_{U \in \mathcal{U}_f}\  f \textrm{ is constant on } F_\alpha \cap U\}$.
\\
\\
\textbf{Fact 2 (\cite{FK})}  
If $f, g \in FN(\kappa)$, then 
\\(a) $\{x \colon f(x) < g(x)\}$ has the Baire property,
\\(b) $\{x \colon f(x) = g(x)\}$ has the Baire property.
\\\\
\textbf{2.6.}
Let $\kappa$ be, as previously, a regular uncountable cardinal and let $I$ be a non-principal $\kappa$-complete ideal on $\kappa$. 

Consider an infinite game $\mathcal{G}(I)$ played by two players $\alpha$ and $\beta$ as follows: $\alpha$ moves first by choosing a set $A_0 \in I^+$. Then $\beta$ chooses a set $B_0 \subseteq A_0$ such that $B_0 \in I^+$. Then $\alpha$ chooses $A_1 \subseteq B_0$ such that $A_1 \in I^+$ and so on. Thus, players produce a sequnece  of sets
$$A_0 \supseteq B_0\supseteq A_1 \supseteq B_1 \supseteq ...$$
where $A_i, B_i \in I^+$, $i \in \omega$.
Player $\alpha$ wins iff $\bigcap_{n \in \omega}A_n = \emptyset$. Then we say that the game $\mathcal{G}(I)$ is \textit{$\alpha$-favorable}. 
\\
\\
\textbf{Fact 3 (\cite{JG})} Let $\kappa$ be a regular uncountable cardinal and let $I$ be a non-principal $\kappa$-complete ideal on $\kappa$. Then $I$ is a precipitous ideal iff $\mathcal{G}(I)$ is not $\alpha$-favorable.
\\

Consider an infnite game $\mathcal{G}_1(I)$ played by two players $\alpha$ and $\beta$. $\alpha$ starts the game by choosing an $I$-function $h_0$. Then $\beta$ answers by choosing a  set $B_0 \subseteq dom(h_0)$ such that $B_0 \in I^+$. Then $\alpha$ chooses an $I$-function $h_1$ such that $dom(h_1) \subseteq B_0$ and $h_{1}(\xi) < h_0(\xi)$ for all $\xi \in dom(h_1)$. Players continue the game as is described above producing a sequence
$$(h_0, B_0, h_1, B_1,...)$$ of $I$-functions $h_0, h_1, ...$ such that $dom(h_{n+1}) \subseteq dom(h_n)$  and $h_{n+1}(\xi) < h_n(\xi)$ for all $\xi \in dom(h_{n+1})$ and $B_n \in I^+, (n \in \omega)$. The game  $\mathcal{G}_1(I)$ is $\alpha$-favorable if $\alpha$ can continue the game infinitely. Otherwise $\beta$ wins. 
\\
\\
\textbf{Fact 4 (\cite{JG})} $\mathcal{G}(I)$ is $\alpha$-favorable iff   $\mathcal{G}_1(I)$ is $\alpha$-favorable.
\\
\\
\textbf{Fact 5 (\cite{JG})} Let $\kappa$ be a regular uncountable cardinal and let $I$ be a non-principal $\kappa$-complete ideal on $\kappa$. Then $I$ is a precipitous ideal iff $\mathcal{G}_1(I)$ is not $\alpha$-favorable.

\section{Main result}

\begin{theorem}
	Let $X$ be a Baire metric space with Kuratowski partition $\mathcal{F}$ of cardinality $\kappa$, where $\kappa = min\{|\mathcal{K}|\colon \mathcal{K} \textrm{ is Kuratowski partition of } X\}$.   Then there exists an open set $U \subset X$ such that the $K$-ideal $I_{\mathcal{F}\cap U}$ on $\kappa$ associated with $\mathcal{F}\cap U$ is precipitous.
\end{theorem}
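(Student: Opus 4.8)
The plan is to read the combinatorics of $I_{\mathcal F\cap U}$ off the topology of $X$ through the index map $\pi\colon X\to\kappa$ sending $x$ to the unique $\alpha$ with $x\in F_\alpha$; under $\pi$ one has $\pi^{-1}(A)=\bigcup_{\alpha\in A}F_\alpha$, so $A\in I_{\mathcal F}$ iff $\pi^{-1}(A)$ is meager, i.e. $I_{\mathcal F}$ is the $\pi$-image of the meager ideal. I would first arrange that the $I$-functions produced during the analysis are, after shrinking their domains to a piece of a suitable $\mathcal U_f$, members of $FN(\kappa)$, so that Fact~2 applies: all the comparison sets $\{x:f(x)<g(x)\}$ and the level sets have the Baire property. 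Then each $I$-function $h\colon D\to\mathrm{Ord}$ on a positive $D$ lifts to the ordinal-valued function $g:=h\circ\pi$ on the non-meager Baire-property set $E_D:=\pi^{-1}(D)$, and by the Baire property $E_D$ is comeager in its essential interior $V_D$, an open set; a finer domain yields a smaller $E$ and hence a smaller $V$.

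The engine of the argument is the well-foundedness of the ordinals together with countable Baire category. I would use the game characterisation of Fact~5: it suffices to find an open $U$ for which $\mathcal G_1(I_{\mathcal F\cap U})$ is not $\alpha$-favorable, i.e. for which every strategy of $\alpha$ can be defeated. Suppose $\alpha$ plays $I$-functions $h_0>h_1>\cdots$ on nested positive domains $D_0\supseteq D_1\supseteq\cdots$; the lifts satisfy $g_{n+1}(x)<g_n(x)$ for every $x\in E_{D_{n+1}}$. If some $x$ lay in $\bigcap_nE_{D_n}$ (equivalently $\pi(x)\in\bigcap_nD_n$), then $g_0(x)>g_1(x)>\cdots$ would be an infinite strictly decreasing sequence of ordinals, which is impossible. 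Hence $\alpha$ can survive only by forcing $\bigcap_nE_{D_n}=\emptyset$, and the whole problem reduces to showing that $\beta$ can prevent this, i.e. can force the nested non-meager sets $E_{D_n}$ to have a common point.

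To force a common point I would play a Banach--Mazur/Choquet game inside $U$: passing through essential interiors turns the moves of $\beta$ and $\alpha$ into a descending sequence of nonempty open sets $V_0\supseteq V_1\supseteq\cdots$ in which the corresponding $E_{D_n}$ are comeager, and $\alpha$'s attempt to empty the intersection is exactly an attempt of the ``empty'' player to win the Banach--Mazur game on $U$. Since $X$, hence every open $U$, is a Baire space, that player has no winning strategy (this is the sense in which the ``$\alpha$-favorable'' notion of Choquet enters, cf. \cite{C}); therefore against any strategy of $\alpha$ the player $\beta$ can choose its open moves so that $\bigcap_nV_n\neq\emptyset$, and since each $V_n\setminus E_{D_n}$ is meager, $\sigma$-completeness of the meager ideal yields a point in $\bigcap_nE_{D_n}$, producing the forbidden descending sequence of ordinals and defeating $\alpha$. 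By Fact~5 this makes $I_{\mathcal F\cap U}$ precipitous; the parallel argument phrased through descending functionals and Fact~1 gives the first, more computational proof. I expect the main obstacle to be the faithful translation between the combinatorial moves of $\beta$ (positive subsets $B_n\subseteq\kappa$) and its topological moves (open subsets of $U$): because each single piece $F_\alpha$ is meager, an open set can only be realised as the essential interior of a whole union $E_{B_n}$, and one must choose $U$, via a common refinement of the families $\mathcal U_f$, so that this correspondence is available at every stage. It is precisely here that only Baire-ness, and not completeness, of $X$ is used, since the nonempty player need not possess a \emph{uniform} winning strategy but only has to defeat each given strategy of $\alpha$.
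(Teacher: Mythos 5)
Your toolkit is the right one---the lifting of $I$-functions $h$ to functions constant on the pieces $F_\beta\cap U$ (members of $FN(\kappa)$, so that Fact 2 applies), and the engine ``well-foundedness of the ordinals versus countable Baire category''---and your reduction through Fact 5 matches the paper's second theorem. But the central step of your argument, simulating a strategy of $\alpha$ in $\mathcal{G}_1(I_{\mathcal{F}\cap U})$ as a strategy of the ``empty'' player in the Banach--Mazur game on $U$, does not go through, and the obstacle you yourself flag at the end is exactly the fatal one. For the simulation to produce a legal Banach--Mazur play, after the topological player shrinks to a nonempty open $W$ strictly inside the current essential interior $V_{D_n}$, you must answer in the combinatorial game with a positive $B_n\subseteq D_n$ such that \emph{every} positive $D_{n+1}\subseteq B_n$ that $\alpha$ might play has non-meager trace in $W$ (so that $V_{D_{n+1}}\cap W\neq\emptyset$ is an available next move). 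No such $B_n$ exists in general, because positivity with respect to the fixed ideal $I_{\mathcal{F}\cap U}$ does not localize: the candidate $B_n=\{\beta\in D_n : F_\beta\cap W\neq\emptyset\}$ is positive, but each $F_\beta$ is meager and can meet $W$ in a nowhere dense set while carrying its mass elsewhere in $U$, so $\alpha$ may answer with a positive $D_{n+1}\subseteq B_n$ whose realization $E_{D_{n+1}}$ is meager inside $W$; the opposite candidate $\{\beta\in D_n : F_\beta\cap U\subseteq W\}$ confines the mass but can be empty (all pieces may be dense in $U$). Thus the simulated play dies and Baireness of $U$ is never brought to bear. Your proposed remedy---choosing $U$ in advance via a common refinement of the families $\mathcal{U}_f$---cannot repair this: the relevant functions $f$, hence the families $\mathcal{U}_f$, arise only in the course of a play against one particular strategy, and no single open set can be refined in advance against all strategies and all stages. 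There is also a structural reason why any argument confined to one fixed $U$ must fail: for a particular $U$ the ideal $I_{\mathcal{F}\cap U}$ may simply not be precipitous (a $K$-ideal can even be the Fr\'echet ideal, see \cite{JJ}), so the hypothesis that the \emph{other} open sets also fail has to enter the proof somewhere.

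That is precisely how the paper proceeds, and it is a genuinely different route from yours: it never translates individual moves into open sets. One assumes, towards a contradiction, that $I_{\mathcal{F}\cap U}$ fails to be precipitous for \emph{every} open $U$; fixes a maximal disjoint family $\mathcal{U}$ of open sets with dense union; on each $U\in\mathcal{U}$ takes a witnessing descending sequence of functionals (Fact 1), i.e.\ descending $I_{\mathcal{F}\cap U}$-functions $h^U_k$; and glues the lifted step functions over all $U\in\mathcal{U}$ at once into $f_k=\bigcup_{U\in\mathcal{U}}\bigcup_{\beta}f^U_{k,\beta}\in FN(\kappa)$. The descent sets $V_k=\{x : f_k(x)>f_{k+1}(x)\}$ are then comeager in the whole space $X$---this is where the failure on \emph{all} open sets, via the density of $\bigcup\mathcal{U}$, is used---and the Baire category theorem applied to $X$ yields a point $x_0\in\bigcap_{k\in\omega}V_k$, hence an infinite descending sequence of ordinals $f_0(x_0)>f_1(x_0)>\cdots$, a contradiction. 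The good $U$ emerges non-constructively from this contradiction rather than being exhibited and defended move by move. If you want to keep your game-theoretic formulation (as the paper's Theorem 2 does), the repair is to adopt the same global scheme: assume $\alpha$ wins on every open $U$, run the plays in parallel on all members of $\mathcal{U}$ with $\beta$ playing canonically, glue, and contradict well-foundedness; the Banach--Mazur game and the distinction between ``no winning strategy for the empty player'' and ``a winning strategy for the nonempty player,'' while correctly stated, can be dispensed with entirely.
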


\begin{proof}
	Let $\mathcal{F} = \{F_\alpha \colon \alpha < \kappa\}$, (as was fixed in Section 2.2).
	We will show that there exists an open set $U \subset X$ such that 
	$$I_{\mathcal{F}\cap U} = \{A \subset \kappa \colon \bigcup_{\alpha \in A} F_\alpha\cap U \textrm{ is meager}\}$$ is precipitous.
	
	Suppose that  for any  open $U \subset X$ the ideal $I_{\mathcal{F}\cap U}$ is not precipitous. 
	\\
	Fix a family $\mathcal{U}$ of open and disjoint subsets of $X$ such that $\bigcup \mathcal{U}$ is dense in $X$ and fix $U \in \mathcal{U}$.
	Then by Fact 1 there exists a sequence of functionals 
$\Phi^U_{0} > \Phi^U_{1}> ... $ on some set $S^{U} \in  I^{+}_{\mathcal{F}\cap U}$.
	Let $W^U_k= W_{\Phi^U_k}$ be an $I_{\mathcal{F}\cap U}$-partition (defined in Section 2.4).
	 Let $X^U_k \subset S^U$ be such that $X^U_k \in W^U_k$ for any $k \in \omega$.
	Since $I_{\mathcal{F}\cap U}$ is not precipitous, $\bigcap_{k\in \omega} X^U_{k} = \emptyset$ for any $X^U_{k} \in W^U_{k}$, $ k \in \omega.$
	
Each  $X^U_{k}$ is the domain of some $I_{\mathcal{F}\cap U}$-function $h^U_{k} \in \Phi^U_{k}$ and if $X^U_k \supseteq X^U_{k+1}$, then
$h^{U}_{k} (\beta) >h^{U}_{k+1}(\beta)$ for all $\beta \in X^{U}_{k+1}$, (see Section 2.4.) 

Now, for any $\beta \in X^U_k$ and any $h^U_k \in \Phi^U_{k}$ define a function $f^U_{k, \beta} \in {^X}\kappa$ such that
\\
1) $dom(f^U_{k, \beta})  = F_\beta \cap U$,
\\
2) $f^U_{k, \beta}(x) = h^U_k(\beta)$ for any $x \in F_\beta \cap U$.
\\
Then, by properties of functions $h^U_k, k \in \omega$  we have that 
$$f^U_{k, \beta}(x) > f^U_{k+1, \beta}(x) \textrm{ for any } x \in F_\beta \cap U.$$
\\
Now, for any $k \in \omega$ consider a function
$$f_k = \bigcup_{U \in \mathcal{U}} \bigcup_{\beta < \kappa} f^U_{k, \beta}.$$ Then $f_k\in FN(\kappa)$ for any $k \in \omega$.
By Fact 2, for any $k \in \omega$ the set
$$V_k = \{x\colon f_k(x) > f_{k+1}(x)\}$$
has the Baire property. Then for any $k \in \omega$ the set $X \setminus V_k$ has also the Baire property and moreover is meager in $X$.
\\Indeed. Suppose that there is $k_0 \in \omega$ for which   $X \setminus V_{k_0} = M \triangle W$ for some  meager $M$ and open $W$ and such that  $(X\setminus V_{k_0}) \cap W$ is nonempty. 
Let $x' \in(X\setminus V_{k_0}) \cap W$. Then $f_{k_0}(x') \leqslant f_{k_0+1}(x')$. But $f_k =  \bigcup_{U \in \mathcal{U}} \bigcup_{\beta < \kappa} f^U_{k, \beta}$ and by 2) in the definition of $f^U_{k, \beta}$ we have that $h^U_{k_0}(\beta) \leqslant h^U_{k_0 +1}(\beta)$. A contradiction to the properties of ${I_{\mathcal{F}\cap U}}$-functions $h^U_k$.
Thus, $V_k$ is co-meager for any $k \in \omega$.

By the Baire Category Theorem, (see e.g. \cite[p. 197-198, 277]{RE}), there exists $x_0 \in \bigcap_{k \in \omega}V_k$. Then $$f_0(x_0) > f_1(x_0) > f_2(x_0) > ...$$
what is impossible since $f_k(x_0)$ are ordinals.
\end{proof}

\begin{theorem}
Let $X$ be a Baire metric space with Kuratowski partition $\mathcal{F}$ of cardinality $\kappa$, where $\kappa = min\{|\mathcal{K}|\colon \mathcal{K} \textrm{ is Kuratowski partition of } X\}$, and let $I_\mathcal{F}$ be a $K$-ideal on $\kappa$ associated with $\mathcal{F}$. Then the game $\mathcal{G}_1(I_{\mathcal{F}\cap U})$ is not $\alpha$-favorable for some open set $U \subset X$.
\end{theorem}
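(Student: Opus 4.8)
The plan is to run exactly the Baire-category argument of Theorem 1, but to feed it with the infinite play guaranteed by $\alpha$-favorability rather than with the descending functionals extracted from non-precipitousness. (In principle one could simply combine Fact 5 with Theorem 1, since Fact 5 says $I_{\mathcal{F}\cap U}$ is precipitous iff $\mathcal{G}_1(I_{\mathcal{F}\cap U})$ is not $\alpha$-favorable; the point of this second proof, however, is to give the self-contained game-theoretic version.) So I would argue by contradiction and assume that $\mathcal{G}_1(I_{\mathcal{F}\cap U})$ is $\alpha$-favorable for \emph{every} open $U \subset X$. As in Theorem 1, I would fix a family $\mathcal{U}$ of pairwise disjoint open subsets of $X$ with $\bigcup\mathcal{U}$ dense in $X$, and work with the members $U \in \mathcal{U}$.

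Next, for each $U \in \mathcal{U}$ I would convert $\alpha$-favorability into one concrete infinite run of the game. Letting $\beta$ answer trivially (say $B^U_n = dom(h^U_n)$) and letting $\alpha$ follow a strategy that continues the game forever, one obtains a sequence $(h^U_0, B^U_0, h^U_1, B^U_1,\dots)$ of $I_{\mathcal{F}\cap U}$-functions with nested domains $X^U_{k+1} := dom(h^U_{k+1}) \subseteq dom(h^U_k) =: X^U_k$, all in $I^{+}_{\mathcal{F}\cap U}$, and with $h^U_{k+1}(\beta) < h^U_k(\beta)$ for every $\beta \in X^U_{k+1}$. These $h^U_k$ play precisely the role that the selected $I$-functions $h^U_k \in \Phi^U_k$ played in Theorem 1; only their provenance differs, the rules of $\mathcal{G}_1$ supplying directly both the nesting of the domains and the pointwise strict decrease of the values.

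From here the argument would be verbatim that of Theorem 1. For $\beta \in X^U_k$ I would define $f^U_{k,\beta}$ on $F_\beta \cap U$ to be the constant $h^U_k(\beta)$, so that the value-decrease of the $h^U_k$ gives $f^U_{k,\beta}(x) > f^U_{k+1,\beta}(x)$ on $F_\beta \cap U$. Gluing, $f_k = \bigcup_{U\in\mathcal{U}}\bigcup_{\beta<\kappa} f^U_{k,\beta}$ would lie in $FN(\kappa)$, witnessed by $\mathcal{U}$, so by Fact 2 each set $V_k = \{x : f_k(x) > f_{k+1}(x)\}$ would have the Baire property, and the same open-set argument as in Theorem 1 would force $X \setminus V_k$ to be meager. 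By the Baire Category Theorem $\bigcap_{k} V_k \neq \emptyset$, and any $x_0$ in this intersection would yield $f_0(x_0) > f_1(x_0) > \cdots$, an impossible infinite descending chain of ordinals. Hence $\alpha$-favorability cannot hold for every $U$, i.e. $\mathcal{G}_1(I_{\mathcal{F}\cap U})$ fails to be $\alpha$-favorable for some open $U \subset X$.

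The one step I expect to require care is the second: packaging ``$\alpha$ can play forever'' into a single infinite run, chosen uniformly over all $U \in \mathcal{U}$, so that the resulting $f_k$ is a genuine element of $FN(\kappa)$ and so that the decrease $f_k > f_{k+1}$ really holds throughout $\bigcup_{U}\bigcup_{\beta \in X^U_{k+1}}(F_\beta \cap U)$. Once the right play has been fixed, everything downstream—the appeal to Fact 2, the meagerness of $X \setminus V_k$, and the final well-foundedness contradiction—is identical to Theorem 1, so no new estimates are needed.
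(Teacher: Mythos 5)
Your proposal is correct and follows essentially the same route as the paper's own proof: assume $\alpha$-favorability for every open $U$, fix a disjoint open family $\mathcal{U}$ with dense union, extract from each game on $U \in \mathcal{U}$ an infinite run with nested domains and strictly decreasing $I$-function values, glue the resulting constant-on-pieces functions into $f_k \in FN(\kappa)$, and derive the well-foundedness contradiction via Fact 2 and the Baire Category Theorem. Even the step you flag as delicate—fixing one concrete infinite play per $U$ (e.g.\ by letting $\beta$ answer with the full domain)—is handled in the paper in the same spirit, where a single path through the tree $T^U$ of finite positions is fixed for each $U$.
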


\begin{proof}
	Let $\mathcal{F} = \{F_\alpha \colon \alpha < \kappa\}$, (as was fixed in Section 2.2). 	We will show that there exists an open set $U \subset X$ such that the game $\mathcal{G}_1(I_{\mathcal{F}\cap U})$ is not $\alpha$-favorable, where
	$$I_{\mathcal{F}\cap U} = \{A \subset \kappa \colon \bigcup_{\alpha \in A} F_\alpha\cap U \textrm{ is meager, } F_\alpha \in \mathcal{F}\}.$$ 
	\indent
	Suppose that for any open set $U \subset X$ the game $\mathcal{G}_1(I_{\mathcal{F}\cap U})$ is $\alpha$-favorable.
	
	Fix a family $\mathcal{U}$ of open and disjoint subsets of $X$ such that $\bigcup \mathcal{U}$ is dense in $X$ and fix $U \in \mathcal{U}$. 
	$\alpha$ starts the game $\mathcal{G}_1(I_{\mathcal{F}\cap U})$ by choosing $I_{\mathcal{F}\cap U}$-function $h^U_0$. Then $\beta$ chooses $B^U_0 \subseteq dom (h^U_0)$ such that $B^U_0 \in I^+_{\mathcal{F}\cap U}$. Then $\alpha$ chooses $I_{\mathcal{F}\cap U}$-function $h^U_1$ such that $dom(h^U_0) \subseteq B^U_0$ and $h^U_{1}(\beta) < h_0(\beta)$ for all $\beta \in dom(h^U_1)$. Then $\beta$ choose  $B^U_1 \subseteq dom(h^U_1)$ such that $B^U_1 \in  I^+_{\mathcal{F}\cap U}$ and so on.
	Let 	$$(h^{U}_{0}, B^{U}_{0},h^U_1, B^U_1..., h^U_n),$$
	be a finite sequence obtained in this game, $(n \in \omega)$. 
	
	Let $T^U$ denotes the set of all such finite sequences. Note that $T^U$ ordered by extension of sequences is a tree, but our consideration below we will provided for a fixed path of $T^U$. 
	
	Since $I_{\mathcal{F}\cap U}$ is not precipitous, $\alpha$ has a winning strategy. Thus, we have
	$h^U_{k+1}(\beta) < h^U_k(\beta)$ for all $\beta \in dom(h^U_{k+1})$ and $\alpha$ has a legal move for any $k \in \omega$. 
	
	Denote $X^U_k = dom(h^U_k)$, for $U \subset X$ and $k \in \omega$.
	Then by our construction 
	$$B^U_k : = X^U_{k+1} = \{\beta \colon h^U_k(\beta) > h^U_{k+1}(\beta)\}.$$
	\indent
Now, for each $\beta \in B^U_k$ and $h^U_k \in \Phi^U_{k}$ define exactly one function $f^U_{k, \beta} \in {^X}\kappa$ such that
\\
1) $dom(f^U_{k, \beta})  = F_\beta \cap U$,
\\
2) $f^U_{k, \beta}(x) = h^U_k(\beta)$ for any $x \in F_\beta \cap U$.
\\
Then by properties of functions $h^U_k, k \in \omega$  we have that 
$$f^U_{k, \beta}(x) > f^U_{k+1, \beta}(x) \textrm{ for any } x \in F_\beta \cap U.$$

Since our considerations have been provided for any $U \in \mathcal{U}$, we can take a function
$f_k = \bigcup_{U \in \mathcal{U}} \bigcup_{\beta < \kappa} f^U_{k, \beta},$  for any $k \in \omega$. Then $f_k\in FN(\kappa)$, $k \in \omega$.
By Fact 2, for any $k \in \omega$ the set
$V_k = \{x\colon f_k(x) > f_{k+1}(x)\}$
has the Baire property and, moreover, is co-meager, (see the adequate part of the proof of Theorem 1). 

By the Baire Category Theorem, (see e.g. \cite[p. 197-198, 277]{RE}), there exists $x_0 \in \bigcap_{k \in \omega}V_k$. Then $$f_0(x_0) > f_1(x_0) > f_2(x_0) > ...$$
what is impossible since $f_k(x)$ are ordinals. Thus, the game $\mathcal{G}_1(I_{\mathcal{F}\cap U})$ is not $\alpha$-favorable for some open  set $U \subset X$.
	 \end{proof}
\section{Open problem}
 
 In \cite{JG} one can find other "types" of precipitous ideals: weakly-precipitous, (equivalent to some game introduced by S. Shelah) and pseudo-precipitous. Since both mentioned notions are consistent to something stronger than "measurable"  namely $"\kappa^+$-saturated", the natural question  is arisen, whether one can consider theorem adequate for Theorem 1 (and Theorem 2) but for these  notions. 
 But such theorems may occur false, because in  game 
 characterisation of weakly- and pseudo-precipitousness
 the assumption that an ideal must be normal is important. Now we cannot explicity state whether the $K$-ideal from Theorem 1  is normal. Furthemore, there are models of ZFC in which there are precipitous ideals which are not normal precipitous ideals, (see e.g. \cite{G1}). 
 Summarizing, the followng question has been arisen.
 \\
 \\
 \textbf{Open Problem.} Considering assumptions given in Theorem 2, does there exist an open set $U \subseteq X$ for which $I_{\mathcal{F}\cap U}$ is a normal precipitous ideal? If yes, whether $I_{\mathcal{F}\cap U}$ can be pseudo-precipitous (defined in \cite{JG})?

\begin {thebibliography}{123456}
\thispagestyle{empty}

\bibitem{LB} L. Bukovsk\'y,    
Any partition into Lebesgue measure zero sets produces a non-measurable set, 
Bull. Acad. Polon. Sci. S\'er. Sci. Math. 27(6) (1979) 431 - 435.

\bibitem{CK} C. C. Chang, H. J. Keisler, Model Theory, North Holland, 1978.

\bibitem{C} G. Choquet,
Lectures on analysis. Vol. I: Integration and topological vector spaces, New York-Amsterdam 1969.

\bibitem{EFK}  A. Emeryk, R. Frankiewicz and W. Kulpa,
On functions having the Baire property,
Bull. Ac. Pol.: Math.  27 (1979) 489--491.

\bibitem{RE}  R. Engelking,
General Topology,
Heldermann Verlag, Berlin, 1989.

\bibitem{FK}  R. Frankiewicz and K. Kunen,
Solutions of Kuratowski's problem on functions having the Baire property, I,
Fund. Math. 128(3) (1987) 171--180.

\bibitem{G1} M. Gitik,
A model with a precipitous ideal, but no normal
precipitous ideal, 
J. Math. Log. 13 (2013), no. 1, 1250008, 22 pp. 

\bibitem {TJ}  T. Jech, 
Set Theory,
The third millennium edition, revised and expanded. Springer Monographs in Mathematics. Springer-Verlag, Berlin, 2003.

\bibitem{JG}{\sc T. Jech},
Some properties of $\kappa$-complete ideals defined in terms of infinite games,
Ann. Pure and Appl. Logic 26 (1984) 31--45.

\bibitem{JJ} J. Jureczko,
The new operations on complete ideals,  Open Math. 17 (2019), no. 1, 415–422.

\bibitem{KK}  K. Kuratowski,
Quelques problem\'es concernant les espaces m\'etriques nonseparables,
Fund. Math. 25 (1935) 534--545.

\bibitem{KK1} K. Kuratowski,
Topology, vol. 1,
Academic Press, New York and London, 1966.

\end {thebibliography}

{\sc Ryszard Frankiewicz}
\\
Silesian Univercity of Technology, Gliwice, Poland.
\\
{\sl e-mail: ryszard.frankiewicz@polsl.pl}
\\

{\sc Joanna Jureczko}
\\
Wroc\l{}aw University of Science and Technology, Wroc\l{}aw, Poland
\\
{\sl e-mail: joanna.jureczko@pwr.edu.pl}

\end{document}